\newtheorem{thm}{Theorem}[section]
\newtheorem{prop}[thm]{Proposition}
\newtheorem{lem}[thm]{Lemma}
\theoremstyle{definition}
\newtheorem{defn}[thm]{Definition}
\newtheorem{exmp}[thm]{Example}
\theoremstyle{remark}
\newtheorem{rem}[thm]{Remark}
\newcommand{\Aut}{{\rm Aut}}
\newcommand{\GL}{{\rm GL}}
\newcommand{\h}{\mathbb{H}}
\let\c@equation\c@thm
\numberwithin{equation}{section}
\title{$\mathbb{H}_{2n+1}$-structures on odd dimensional projective spaces}
 \author[Cong~Ding]{Cong~Ding} 
 \address[Cong~Ding]
 {School of Mathematical Sciences\\ Shenzhen University\\Guangdong\\ China}
 \email{congding@szu.edu.cn}
 \author[Zhijun~Luo]{Zhijun~Luo}
 \address[Zhijun~Luo]
 {Center for complex geometry\\ Institute for Basic Science\\ 55 Expo-ro\\ Yuseong-gu\\ Daejeon\\ 34126\\ Republic of Korea.}
\email{luozj@amss.ac.cn \\ luozj@ibs.re.kr}
\begin{document}
\begin{abstract}
We prove that the Heisenberg group $\h_{2n+1}$ admits infinitely many inequivalent equivariant compactifications into $\mathbb{P}^{2n+1}$ for all $n\geq 1$. This result provides an analog of Hassett-Tschinkel's classical result beyond commutative algebraic groups. 
\end{abstract}

\maketitle
\section{Introduction}
We work over the complex number field $\mathbb{C}$ throughout this paper.

The study of the equivariant compactification of algebraic groups has a long and rich history. For reductive groups, the geometry of their equivariant compactifications is often understood via combinatorial methods (see for example \cite{Knop91} and the references therein). In contrast, for unipotent groups, the simplest example is the vector group $\mathbb{G}_a^n$, for which \cite{AZ22} provides a systematic survey on the study of its equivariant compactifications. In particular, to study the equivariant embeddings of $\mathbb{G}^n_a$ into projective space $\mathbb{P}^n$, a classical result by Hassett and Tschinkel \cite{hassett1999geometry} establishes a bijection between such equivariant embeddings and Artinian local commutative associative unital algebras of dimension $n+1$. This correspondence is now known as the \textit{Hassett-Tschinkel correspondence}. 
Moreover, they demonstrated that such equivariant embeddings are not unique up to equivalence when $n\geq 2$, and that there are infinitely many inequivalent equivariant embeddings when $n\geq 6$. For further studies on equivariant embeddings of $\mathbb{G}_a^n$ into other types of algebraic varieties, we refer the reader to \cite{AS11}\cite{AP14}(for projective hypersurfaces), \cite{Dev15}(for flag varieties), \cite{FH14}(for Fano manifolds of Picard number one), and \cite{AR17}\cite{Dzh22}(for toric varieties).

The Heisenberg group (see Definition \ref{defHeisen}) can be regarded as a unipotent algebraic group that is slightly more intricate than the vector group. As a variety, it is isomorphic to $\mathbb{C}^{2n+1}$; although not commutative as a group, its commutator subgroup is only one-dimensional.
To the best of our knowledge, there has been little work in the literature on the study of its equivariant compactification, except for \cite{Sha2004heisenberg, huan2022campana} from the aspect of arithmetic geometry in dimension $3$. The odd-dimensional projective space $\mathbb{P}^{2n+1}$ is the simplest variety that serves as its equivariant compactification. 

In this short note, we study the equivariant embeddings of the $(2n+1)$-dimensional Heisenberg group (denoted by $\mathbb{H}_{2n+1}$) into $\mathbb{P}^{2n+1}$. Our main result (Theorem \ref{mainthm}) shows that there are infinitely many such equivariant embeddings up to equivalence, \textit{in any dimension}.
Our proof is based on the study of a special class of the equivariant embeddings of $\mathbb{H}_{2n+1}$, in which the central element acts trivially on the boundary, i.e., it fixes every point on the boundary. This condition yields a descended additive action. Furthermore, when the descended action is tautological (Definition \ref{tautolo_des}), we show that there exist infinitely many inequivalent equivariant embeddings of $\mathbb{H}_{2n+1}$ in $\mathbb{P}^{2n+1}$ corresponding to it.

We remark here that the equivariant embeddings of $U_P$ into flag variety $G/P$ has been studied by Cheong \cite{Che17}, where $G$ is a simple complex algebraic group, $P\subset G$ is a parabolic subgroup and $U_P$ denotes the unipotent radical of $P$. According to \cite[Theorem 1]{Che17}, such an embedding of $U_P$ in $G/P$ is unique up to equivalence, except in the cases where $G/P$ corresponds to one of the following: $(A_\ell, \alpha_1), (B_\ell, \alpha_\ell),(C_\ell, \alpha_1)$, or $(G_2, \alpha_1)$. These exceptions arise due to the failure of the Cartan–Fubini type extension theorem of Hwang and Mok \cite{HMCartan}, or because $\Aut(G/P)\neq G$ in those cases. More detailed discussion can be found in \cite{Che17}.
Our result concerns precisely the case when $G/P$ corresponds to $(C_\ell, \alpha_1)$, which is among the exceptional cases in Cheong's uniqueness result.

\section{Preliminaries}
\subsection{Some basic notions}
We recall some definitions here.
\begin{defn}\label{defHeisen}
Let $\mathbb{W}$ be a complex vector space of dimension $2n$, and $\omega$ be a non-degenerate skew-symmetric form on $\mathbb{W}$. The Heisenberg group $\mathbb{H}_{2n+1}$ is the set $\mathbb{W} \times \mathbb{C}$ endowed with the group law:
\[
    (w_1, t_1)\cdot (w_2, t_2) = (w_1+w_2, t_1+t_2+\frac{1}{2}\omega(w_1, w_2)).
\] and corresponding Lie algebra is $\mathbb{W} \times \mathbb{C}$ with Lie bracket given by \[[(w_1, t_1), (w_2, t_2)]=(\mathbf{0}, \omega(w_1,w_2))\]
\end{defn}

It is easy to see that the image of the Lie bracket is the center and is of 1-dimensional. We denote $T:=(\mathbf{0},1)\in \mathbb{H}_{2n+1}$. It is easy to see that $\mathbb{H}_{2n+1}/\mathbb{C}T \cong \mathbb{G}^{2n}_{a}$.

\begin{defn}
    Let $\mathbb{G}$ be a connected linear algebraic group. An irreducible algebraic variety $X$ admits a (left) $\mathbb{G}$-action if there exists a morphism $\psi:\mathbb{G}\times X\to X$, satisfying the standard compatibility conditions. 
    
    A variety $X$ is an equivariant compactification of $\mathbb{G}$ (alternatively we say $X$ admits a $\mathbb{G}$-structure) if the variety admits an \textbf{effective} (left) $\mathbb{G}$-action, namely, the stabilizer of a generic point is trivial and the orbit of a generic point is dense. In other words, we say $X$ is an equivariant embedding of $\mathbb{G}$.
\end{defn}
\begin{defn}\label{actionequi}
    Actions $\alpha_i: \mathbb{G}_i\times X_i \to X_i$ of algebraic groups $\mathbb{G}_i$ on algebraic varieties $X_i$, $i= 1, 2$, are said to be \textit{equivalent} if there are a group isomorphism $\phi: \mathbb{G}_1\to \mathbb{G}_2$ and a variety isomorphism $\psi: X_1 \to X_2$ with the following commutative diagram
    \[	
	\begin{tikzcd}
	\mathbb{G}_1\times X_1 \arrow{d}{\alpha_1} \arrow{r}{(\phi,\psi)} & \mathbb{G}_2\times X_2\arrow{d}{\alpha_2}\\	
	X_1 \arrow{r}{\psi} & X_2.
	\end{tikzcd}
	\]
\end{defn}

\subsection{Classical Hassett-Tschinkel correspondence}
We recall the classical Hassett-Tschinkel correspondence for the additive structure of $\mathbb{P}^n$.
\begin{thm}[\cite{hassett1999geometry}]
 There are one-to-one correspondences between the following objects:
\begin{itemize}
    \item[(a)] additive structures on $\mathbb{P}^n$, i.e. effective actions: $\mathbb{G}_a^n\times \mathbb{P}^n\rightarrow \mathbb{P}^n$ with an open orbit;
   \item[(b)] local commutative associative unital algebras $\mathcal{A}$ of dimension $n+1$.
\end{itemize}
The bijection is considered up to equivalence of actions and algebra isomorphisms.

\end{thm}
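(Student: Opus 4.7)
The plan is to exhibit explicit constructions in both directions and then check they are mutually inverse up to the respective notions of equivalence. I would state the correspondence as: an algebra $\mathcal{A}$ with maximal ideal $\mathfrak{m}$ corresponds to the action of $(\mathfrak{m},+)\cong\mathbb{G}_a^n$ on $\mathbb{P}(\mathcal{A})$ given by $m\cdot [v]=[(1+m)v]$, whose open orbit is the affine chart $\{[1+m]:m\in\mathfrak{m}\}\cong\mathfrak{m}$. Since $\mathcal{A}$ is local with $\mathfrak{m}$ nilpotent, $1+\mathfrak{m}$ is a subgroup of $\mathcal{A}^\times$ and the exponential identity on nilpotent elements makes the identification with the additive group $(\mathfrak{m},+)$ routine; effectiveness follows because the stabilizer of $[1]$ is trivial.

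For the reverse direction, I would start from an effective action $\rho:\mathbb{G}_a^n\to\Aut(\mathbb{P}^n)=\mathrm{PGL}_{n+1}$. Since $\mathbb{G}_a^n$ is unipotent and simply connected with no nontrivial characters, $\rho$ lifts to $\tilde\rho:\mathbb{G}_a^n\to\mathrm{GL}_{n+1}(\mathbb{C})$ with unipotent image. Differentiating gives an embedding of the abelian Lie algebra $\mathfrak{n}$ into $\mathfrak{gl}_{n+1}$ whose image consists of pairwise commuting nilpotent endomorphisms. I would then define
\[
\mathcal{A}:=\mathbb{C}\cdot I\oplus d\tilde\rho(\mathfrak{n})\subset\mathrm{End}(\mathbb{C}^{n+1}).
\]
The candidate algebra is visibly a commutative unital vector subspace of dimension $n+1$; the only thing to verify is that it is closed under composition.

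The key step, and the main obstacle, is showing this closure. I would pick any $\tilde p\in\mathbb{C}^{n+1}$ lifting a point of the open orbit and use openness of the orbit to see that the evaluation $X\mapsto X\tilde p$ maps $\mathcal{A}$ isomorphically onto $\mathbb{C}^{n+1}$. Given $X,Y\in d\tilde\rho(\mathfrak{n})$, I can then find unique $c\in\mathbb{C}$ and $Z\in d\tilde\rho(\mathfrak{n})$ with $XY\tilde p=c\tilde p+Z\tilde p$. To promote this pointwise identity to $XY=cI+Z$ in $\mathrm{End}(\mathbb{C}^{n+1})$, I would exploit that $X,Y,Z$ commute with $\tilde\rho(g)$ for all $g\in\mathbb{G}_a^n$, whence $(XY-cI-Z)$ vanishes on the orbit $\tilde\rho(\mathbb{G}_a^n)\tilde p$; openness of the orbit in $\mathbb{P}^n$ forces this orbit to span $\mathbb{C}^{n+1}$, and the claim follows. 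The resulting $\mathcal{A}$ is local since $I+\mathfrak{m}$ with $\mathfrak{m}:=d\tilde\rho(\mathfrak{n})$ consists of units (nilpotent $+$ identity) while $\mathfrak{m}$ is the unique maximal ideal.

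Finally, I would verify that the two constructions are mutually inverse and compatible with equivalences. Starting from $\mathcal{A}$ and recovering the multiplication from the action on $\mathbb{P}(\mathcal{A})$ gives back $\mathbb{C}\cdot I\oplus\mathfrak{m}$ via left multiplication, which is $\mathcal{A}$ itself. In the other direction, the choice of lift $\tilde\rho$ and of $\tilde p$ is unique up to scalars and the $\mathbb{G}_a^n$-action, yielding an algebra isomorphism class; any equivalence of actions in the sense of Definition \ref{actionequi} induces a $\mathbb{C}$-algebra isomorphism of the associated algebras and conversely. The only subtle point in this last bookkeeping is the lifting $\mathrm{PGL}_{n+1}\leftarrow\mathrm{GL}_{n+1}$ step, which is handled by the vanishing of $\Hom(\mathbb{G}_a^n,\mathbb{G}_m)$; after that, everything is linear algebra.
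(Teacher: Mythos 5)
Your proposal is correct and takes essentially the same route as the paper, which only cites the theorem and sketches the two constructions: algebra $\mapsto$ tautological action of $\exp(\mathfrak{m})$ on $\mathbb{P}(\mathcal{A})$, and action $\mapsto$ the subalgebra of $\operatorname{Mat}_{n+1}(\mathbb{C})$ generated by the lifted group. The one substantive step you add beyond the paper's sketch --- that $\mathbb{C}I\oplus d\tilde\rho(\mathfrak{n})$ is already closed under multiplication, proved via the cyclic vector $\tilde p$ and the spanning of the open orbit --- is exactly the argument needed to see that the generated algebra has dimension $n+1$, and it is sound.
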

The explicit correspondence from (a) to (b) is given as follows. Since $\mathbb{G}^n_a$ acts on $\mathbb{P}^n$ effectively, we can regard $\mathbb{G}^n_a$ as a subgroup of $\operatorname{Aut}(\mathbb{P}^n) = \mathbb{P}\GL_{n+1}(\mathbb{C})$. Denote by $\pi: \GL_{n+1}(\mathbb{C}) \rightarrow \mathbb{P}\GL_{n+1}(\mathbb{C})$ the canonical projection and let $H :=\pi^{-1}(\mathbb{G}^n_a)$. It is easy to see that $H$ is a connected commutative linear algebraic group of dimension $n+1$. 
 Consider $\GL_{n+1}(\mathbb{C})$ as an open subset of $\operatorname{Mat}_{n+1}(\mathbb{C})$ and denote by $\mathcal{A}:=\langle H\rangle$ the associative subalgebra of $\operatorname{Mat}_{n+1}(\mathbb{C})$  generated by $H$. Then $\mathcal{A}$ is the desired commutative unital algebra of dimension $n+1$. 

Conversely from (b) to (a),  if $ \mathcal{A} = \mathbb{C} \oplus \mathfrak{m}$ is local with the unique maximal ideal $\mathfrak{m}$, then its group of invertible elements is $A^{\times} =\mathbb{C}^*(1+\mathfrak{m})$, where $(1+\mathfrak{m}, \times ) \cong  (\mathfrak{m},+) \cong \mathbb{G}_a^n$ via the exponential map. Then the action of $\mathbb{G}^n_a $
is given by the tautological action of $\exp(\mathfrak{m})$ on $\mathcal{A}$.

\subsection{Induced additive structures}
Let $V\cong \mathbb{C}^{2n+2}$.
Since $\mathbb{H}_{2n+1}$ acts on $\mathbb{P}V$ effectively, we can regard it as a subgroup of $\operatorname{Aut}(\mathbb{P}V)=\mathbb{P}\GL_{2n+2}(\mathbb{C})$ and regard its Lie algebra $\mathfrak{h}_{2n+1}$ as a Lie subalgebra of $\operatorname{Mat}_{2n+2}(\mathbb{C})$. Since $\mathbb{H}_{2n+1}$ is unipotent, we may assume that it is contained in $\mathcal{U}_{2n+2}$, the subgroup of upper-triangular matrices with unital diagonal elements. Then $\mathfrak{h}_{2n+1}$ is contained in $\mathfrak{u}_{2n+2}=\operatorname{Lie}(\mathcal{U}_{2n+2})$.

Then we consider the $\mathbb{H}_{2n+1}$-structures on $\mathbb{P}^{2n+1}=\mathbb{P}V$, where $V$ is a $(2n+2)$-dimensional vector space. Let $\mathbb{P}V'$ be the boundary where $V'$ is a $(2n+1)$-dimensional vector subspace in $V$. We focus on a class of special $\mathbb{H}_{2n+1}$-structures on $\mathbb{P}V$, for which the $T$ stabilizes every point on $\mathbb{P}V'$. 

Denote by $o$ the reference point of the open orbit.  Let $\mathbb{I}=\mathbb{C}T$ be the central subgroup of $\mathbb{H}_{2n+1}$ and $v=\overline{\mathbb{I}\cdot o}\backslash \mathbb{I}\cdot o$. Let $\hat{v}$ be a corresponding affine line of $v$ and $\widetilde{V}:=V/\mathbb{C}\hat{v}$. 
We have 
\begin{lem}\label{infinity_unique}
Under the setting that $T$ stabilizes every point on $\mathbb{P}V'$,
    $v$ is independent of the choice of the reference point of the open orbit.
\end{lem}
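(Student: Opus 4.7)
The plan is to identify $v$ with the intrinsic invariant $\mathbb{P}(\operatorname{Im}\tau)\subset\mathbb{P}V'$, where $\tau\in\mathfrak{h}_{2n+1}\subset\operatorname{Mat}_{2n+2}(\mathbb{C})$ is a generator of the Lie algebra of $\mathbb{I}$ under the linear lift $\mathbb{H}_{2n+1}\subset\mathcal{U}_{2n+2}$ fixed in the preceding paragraph. Since such a description makes no reference to the base point, independence of $o$ will be manifest.

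First I would unpack the hypothesis. Because $T=\exp(\tau)$ fixes every point of $\mathbb{P}V'$, the restriction $T|_{V'}$ is a homothety, and the unipotency of $T$ forces this homothety to be the identity; differentiating gives $\tau|_{V'}=0$. In particular, $\tau$ kills a hyperplane and therefore has rank at most one, while $\tau\neq 0$ since otherwise the central subgroup $\mathbb{I}$ would act trivially on $\mathbb{P}V$, contradicting the effectiveness of the $\mathbb{H}_{2n+1}$-action.

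Next comes the key step, namely $\operatorname{Im}\tau\subset V'$. Fix any lift $\hat{o}\in V\setminus V'$ of $o$ and decompose $\tau(\hat{o})=v_0+c\hat{o}$ with $v_0\in V'$ and $c\in\mathbb{C}$. Using $\tau|_{V'}=0$, a one-line induction yields $\tau^n(\hat{o})=c^{n-1}v_0+c^n\hat{o}$ for every $n\geq 1$, and the nilpotency of $\tau$ then forces $c=0$. Hence $\tau(\hat{o})=v_0\in V'$, so $\operatorname{Im}\tau=\mathbb{C}v_0$ is a one-dimensional subspace of $V'$. I expect this nilpotency argument to be the only part requiring genuine thought; everything else is bookkeeping.

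Finally, because $\tau(v_0)=0$, the exponential collapses to $\exp(s\tau)\hat{o}=\hat{o}+sv_0$, so $\overline{\mathbb{I}\cdot o}$ in $\mathbb{P}V$ is the affine line $\{[\hat{o}+sv_0]:s\in\mathbb{C}\}$ together with its single point at infinity $[v_0]$. Therefore $v=[v_0]=\mathbb{P}(\operatorname{Im}\tau)$, which depends only on $\tau$ and hence only on the subgroup $\mathbb{I}$, giving the desired independence of $o$.
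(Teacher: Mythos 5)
Your proof is correct and follows essentially the same route as the paper: linearize the $\mathbb{I}$-action, use the hypothesis to get $\tau|_{V'}=0$, and conclude that the point at infinity of $\overline{\mathbb{I}\cdot o}$ is $[\tau\hat{o}]=\mathbb{P}(\operatorname{Im}\tau)$, which is manifestly independent of $o$. Your nilpotency argument showing $\operatorname{Im}\tau\subset V'$ (hence that the orbit closure really is a projective line) fills in a detail the paper asserts without proof, so it is a welcome addition rather than a divergence.
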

\begin{proof}
Let the hat denotes an affinization and the bracket denotes the projectivization.
    One can see that $\overline{\mathbb{I}\cdot o}$ is a projective line. Moreover $v=[\frac{d}{ds}((sT) \cdot \hat{o})]$. For anthor $o'$ in the open orbit, we may choose an affinization $\hat{o'}=\hat{o}+\hat{v'}$ for some $v'\in \mathbb{P}V'$. Then $v'=[\frac{d}{ds}((sT) \cdot \hat{o'})]=[\frac{d}{ds}((sT) \cdot \hat{o}+(sT)\cdot \hat{v'})]$. As $\mathbb{I}$ stabilizes $v'\in \mathbb{P}V'$, $\frac{d}{ds}(sT) \cdot \hat{v'}=0$ and hence $v'=[\frac{d}{ds}((sT) \cdot \hat{o'})]=[\frac{d}{ds}((sT) \cdot \hat{o}]=v$.
\end{proof}

\begin{lem}\label{T_act_trivial}
    $\mathbb{I}$ acts trivially on $\mathbb{P}\widetilde{V}$.
\end{lem}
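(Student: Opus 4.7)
The plan is to pass to the Lie algebra, where everything becomes a computation with a single nilpotent endomorphism $\tau\in\End(V)$ generating the action of $\mathbb{I}$.

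First I would lift the $\mathbb{H}_{2n+1}$-action from $\mathbb{P}\GL(V)$ to $\GL(V)$. Because $\mathbb{H}_{2n+1}$ is unipotent, we may assume that the lift lands in $\mathcal{U}_{2n+2}$, so $T$ acts on $V$ as a unipotent matrix. By hypothesis $T$ fixes every point of the hyperplane $\mathbb{P}V'$, which means $T|_{V'}$ is a scalar; since $T$ is unipotent this scalar must equal $1$, so $T|_{V'}=\mathrm{id}_{V'}$. Letting $\tau\in\End(V)$ be the infinitesimal generator of $\mathbb{I}=\mathbb{C}T$, so that $sT$ acts as $\exp(s\tau)$, the injectivity of $\exp$ on nilpotent matrices then forces $\tau|_{V'}=0$, i.e.\ $V'\subset\ker\tau$. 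If $\tau$ were zero, $\mathbb{I}$ would act trivially on $V$, contradicting the fact that the $\mathbb{I}$-orbit through the reference point $o$ has dimension one; therefore $\ker\tau=V'$ exactly.

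Next I would pin down the image of $\tau$. Since $\tau$ is nilpotent and the quotient $V/V'$ is one-dimensional, the induced endomorphism of $V/V'$ is zero, hence $\mathrm{im}(\tau)\subset V'=\ker\tau$. In particular $\tau^{2}=0$, and $\mathrm{im}(\tau)$ is a one-dimensional subspace of $V'$. The proof of Lemma~\ref{infinity_unique} identifies $\hat{v}$ with a scalar multiple of $\left.\tfrac{d}{ds}\right|_{s=0}\!\exp(s\tau)(\hat{o})=\tau(\hat{o})$, and $\tau(\hat{o})\neq 0$ because $\hat{o}\notin V'=\ker\tau$. Consequently
\[
\mathbb{C}\hat{v}=\mathbb{C}\,\tau(\hat{o})=\mathrm{im}(\tau).
\]

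Finally I would descend. Since $\mathrm{im}(\tau)\subset\ker\tau$, the endomorphism $\tau$ descends to the zero endomorphism $\bar\tau$ on $\widetilde{V}=V/\mathbb{C}\hat{v}=V/\mathrm{im}(\tau)$. Therefore $\exp(s\bar\tau)=\mathrm{id}_{\widetilde V}$ for every $s\in\mathbb{C}$, and $\mathbb{I}$ acts trivially on $\mathbb{P}\widetilde V$, as claimed. The only delicate points are choosing a compatible lift to $\GL(V)$ and using unipotence to pass from ``$T$ fixes $\mathbb{P}V'$ pointwise'' to ``$\tau$ kills $V'$''; once these are in place the conclusion reduces to the identity $\tau^{2}=0$ combined with $\mathbb{C}\hat{v}=\mathrm{im}(\tau)$.
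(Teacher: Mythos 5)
Your proof is correct and makes explicit what the paper dismisses as ``immediate from Lemma~\ref{infinity_unique}'': the key point in both arguments is that the infinitesimal generator $\tau$ of $\mathbb{I}$ satisfies $\mathrm{im}(\tau)=\mathbb{C}\hat{v}$ and $\tau|_{V'}=0$ (equivalently, every $\mathbb{I}$-orbit closure through a point of the open orbit is a line through the single point $v$), so that passing to $\widetilde{V}=V/\mathbb{C}\hat{v}$ kills the action. Your linearized version via $\ker\tau=V'$ and $\tau^{2}=0$ is the same mechanism spelled out at the Lie algebra level, and is arguably more self-contained than the paper's one-line justification.
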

\begin{proof}
    This follows from Lemma \ref{infinity_unique} immediately.
\end{proof}

\begin{prop}\label{ComDia}
   The action of $\mathbb{H}_{2n+1}$ descends to the action of $\mathbb{H}_{2n+1}/\mathbb{I}$ on $\mathbb{P}\widetilde{V}$, which gives a $\mathbb{G}^{2n}_a$-structure on $\mathbb{P}\widetilde{V}$. In terms of commutative diagram, we have 
   \[
    \begin{tikzcd}[row sep=huge]
    \mathbb{H}_{2n+1} \times \mathbb{P}V \arrow[r,"\tau"] \arrow[d,swap,"Q\times pr"] &
    \mathbb{P}V \arrow[d,"pr"] \\
    \mathbb{H}_{2n+1}/\mathbb{I} \times \mathbb{P}\widetilde{V} \arrow[r,"\Theta(\tau)"]  & \mathbb{P}\widetilde{V},
    \end{tikzcd}
\]
where $Q$ is the canonical quotient morphism, $pr$ is the projection of $\mathbb{P}V$ to $\mathbb{P}\widetilde{V}$ and $\Theta(\tau)$ is the induced additive structure of $\tau$.
\end{prop}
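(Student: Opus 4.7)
The plan is to build the descended action by first showing that the point $v$ is fixed by the whole group $\mathbb{H}_{2n+1}$, then lifting this fact to the linear level to define an induced action on $\mathbb{P}\widetilde{V}$, and finally using Lemma~\ref{T_act_trivial} to see that $\mathbb{I}$ lies in the kernel so the action descends to $\mathbb{H}_{2n+1}/\mathbb{I}$.

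The first step is: for any $g\in \mathbb{H}_{2n+1}$, because $\mathbb{I}$ is central,
\[
g\cdot \overline{\mathbb{I}\cdot o} \;=\; \overline{g\mathbb{I}\cdot o} \;=\; \overline{\mathbb{I}\cdot (g\cdot o)}.
\]
Both sides are projective lines sharing (by Lemma~\ref{infinity_unique}) the same point at infinity $v$, so $g\cdot v=v$; hence $\mathbb{H}_{2n+1}$ stabilizes $v$. Lifting to $\pi^{-1}(\mathbb{H}_{2n+1})\subset \GL(V)$, the line $\mathbb{C}\hat{v}$ is preserved, so we obtain an induced linear action on $\widetilde{V}=V/\mathbb{C}\hat{v}$ and hence on $\mathbb{P}\widetilde{V}$. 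By construction the linear projection $pr:\mathbb{P}V\dashrightarrow \mathbb{P}\widetilde{V}$ from the point $v$ is $\mathbb{H}_{2n+1}$-equivariant with respect to the original action above and the induced action below. Since Lemma~\ref{T_act_trivial} tells us that $\mathbb{I}$ acts trivially on $\mathbb{P}\widetilde{V}$, this induced action factors through $\mathbb{H}_{2n+1}/\mathbb{I}\cong \mathbb{G}_a^{2n}$, giving the map $\Theta(\tau)$ making the diagram commute.

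It remains to check that $\Theta(\tau)$ is an effective action with an open orbit, i.e., genuinely a $\mathbb{G}_a^{2n}$-structure. The image of the open orbit $\mathbb{H}_{2n+1}\cdot o$ under $pr$ is a $(\mathbb{H}_{2n+1}/\mathbb{I})$-orbit; by equivariance and the fact that fibers of $pr$ over points in this image are $1$-dimensional (being slices of $\mathbb{I}$-orbits), this image has dimension $2n=\dim \mathbb{P}\widetilde{V}$, hence is open. For effectiveness, if $g\mathbb{I}$ fixes $pr(o)$, then $g\cdot o$ lies in the fiber $pr^{-1}(pr(o))$; but this fiber, intersected with the open orbit, is exactly $\mathbb{I}\cdot o=\overline{\mathbb{I}\cdot o}\setminus\{v\}$, so $g\cdot o=t\cdot o$ for some $t\in \mathbb{I}$, and freeness of $\mathbb{H}_{2n+1}$ on its open orbit forces $g\in \mathbb{I}$. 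The only nontrivial input is really the first step that $v$ is a $\mathbb{H}_{2n+1}$-fixed point; once $\mathbb{H}_{2n+1}$-invariance of $\mathbb{C}\hat{v}$ is known, the rest is formal linear algebra combined with Lemma~\ref{T_act_trivial}.
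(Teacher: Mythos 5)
Your proof is correct and follows essentially the same route as the paper: the key step in both is to use the centrality of $\mathbb{I}$ together with Lemma~\ref{infinity_unique} to show that $v$ is a fixed point of the full $\mathbb{H}_{2n+1}$-action, after which the descent and the factorization through $\mathbb{H}_{2n+1}/\mathbb{I}$ via Lemma~\ref{T_act_trivial} are formal. Your final paragraph merely spells out in detail the paper's one-line claim that $\Theta(\tau)$ inherits the open orbit and effectiveness, and that verification is sound.
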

\begin{proof}
    For any special $\h_{2n+1}$-structure $\tau$, $T$ stabilizes every point on $V'$.  Let $C = \overline{\mathbb{I} \cdot o}$. For $g\in \h_{2n+1}$,  $g\cdot C$ is still a line in $\mathbb{P}V$. Since $\mathbb{I}$ is the center of $\h_{2n+1}$, we have $$g\cdot v = g\cdot C \backslash g\cdot(\mathbb{I}\cdot o) = \overline{\mathbb{I}\cdot g\cdot o}\backslash (\mathbb{I}\cdot g\cdot o) = v,$$ where the last equality follows from Lemma \ref{infinity_unique}.
    Thus, $v \in \mathbb{P}V$ is a fixed point of the $\mathbb{H}_{2n+1}$-action. Then the action of $\h_{2n+1}$ on $\mathbb{P}V$ can descend to the action on $\mathbb{P}\widetilde{V}$. Moreover by Lemma \ref{T_act_trivial}, we obtain an $(\mathbb{H}_{2n+1}/\mathbb{I})$-action $\Theta(\tau)$ on $\mathbb{P}\widetilde{V}$. Finally,
    since $\tau$ is an equivariant compactification, then so is $\Theta(\tau)$.
\end{proof}

\begin{defn}\label{tautolo_des}
    We call the $\mathbb{G}_a^{2n}$-structure on $\mathbb{P}^{2n}$ coming from the translation the \textbf{tautological $\mathbb{G}_a^{2n}$-structure}, i.e., in homogeneous coordinate the action can be written as  \[(a_1,\cdots, a_{2n})\cdot[z_0,z_1,\cdots, z_{2n}]=[z_0,z_1+a_1z_0,\cdots, z_{2n}+a_{2n}z_0]\]
\end{defn}

Later we are going to show that, there are actually infinitely many inequivalent $\mathbb{H}_{2n+1}$-structures on $\mathbb{P}V$ which descend to the tautological $\mathbb{G}_a^{2n}$-structure on $\mathbb{P}\widetilde{V}$.

\section{Infinity of $\mathbb{H}_{2n+1}$-structures on $\mathbb{P}^{2n+1}$}
In this section,  we still assume that $\mathbb{H}_{2n+1}$ is contained in $\mathcal{U}_{2n+2}$, the subgroup of upper-triangular matrices with unital diagonal elements, and $\mathfrak{h}_{2n+1}$ is contained in $\mathfrak{u}_{2n+2}=\operatorname{Lie}(\mathcal{U}_{2n+2})$.

Denote by $\pi: \GL_{2n+2}(\mathbb{C}) \rightarrow \mathbb{P}\GL_{2n+2}(\mathbb{C})$ the canonical projection and let $\mathcal{A}:=\langle \pi^{-1}(\mathbb{H}_{2n+1})\rangle$ be the associated subalgebra of $\operatorname{Mat}_{2n+2}(\mathbb{C})$ generated by $\pi^{-1}(\mathbb{H}_{2n+1})$. The algebra $\mathcal{A}$ is a local associative unital algebra and the maximal ideal $\mathfrak{m}= \langle \mathfrak{h}_{2n+1}\rangle$ generated by $\mathfrak{h}_{2n+1}$ in $\operatorname{Mat}_{2n+2}(\mathbb{C})$.

For our convenience, we alternatively write the Lie algebra $\mathfrak{h}_{2n+1}=\mathfrak{w}\oplus \mathbb{C}t$ with $\mathbb{C}t$ being the center and $[\mathfrak{w},\mathfrak{w}]=\mathbb{C}t$.
Also we do not distinguish the Lie group $\mathbb{H}_{2n+1}$ (and the Lie algebra $\mathfrak{h}_{2n+1}$) with their image in $\operatorname{Mat}_{2n+2}(\mathbb{C})$.

From the classical Hassett-Tschinkel correspondence for the action of $\mathbb{H}_{2n+1}/\mathbb{C}T$ on $\mathbb{P}\widetilde{V}$, we can write the corresponding unital commutative local algebra as $\mathcal{B}=\mathbb{C}I_{2n+1}\oplus \mathfrak{m}_\mathcal{B} \subset \operatorname{Mat}_{2n+1}(\mathbb{C})$, where $\mathfrak{m}_\mathcal{B}=\mathfrak{h}_{2n+1}/\mathbb{C}t$. The quotient Lie algebra homomorphism $q: \mathfrak{h}_{2n+1} \rightarrow \mathfrak{h}_{2n+1}/\mathbb{C}t$ induces an algebra epimorphism $\theta: \mathcal{A} \rightarrow \mathcal{B}$. 

Additionally, we know that the evaluation morphism $ev: \mathcal{A}\rightarrow \mathcal{A}\cdot \hat{o}$ is a linear map from $\mathcal{A}$ to $V$ where $o$ is the reference point on the open dense orbit in $\mathbb{P}V\cong \mathbb{P}^{2n+1}$ and $\hat{o}$ is the affinization. We can easily observe the following lemmas.
\begin{lem}\label{property_algebra_morphism}
    $\theta(t) = 0$, $\theta$ is injective on $\mathfrak{w}$, $\theta(\mathfrak{w})$ generates $\mathfrak{m}_\mathcal{B}$, and $\theta(\ker(ev)) = 0$.
\end{lem}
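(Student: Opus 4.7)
The first three assertions are routine bookkeeping. Since by construction $\theta|_{\mathfrak{h}_{2n+1}}$ coincides with the Lie-algebra quotient $q:\mathfrak{h}_{2n+1}\to\mathfrak{h}_{2n+1}/\mathbb{C}t$, and since $\mathfrak{h}_{2n+1}=\mathfrak{w}\oplus\mathbb{C}t$ splits as a vector space, I read off immediately: $\theta(t)=q(t)=0$; the restriction $\theta|_\mathfrak{w}=q|_\mathfrak{w}:\mathfrak{w}\to\mathfrak{h}_{2n+1}/\mathbb{C}t$ is a linear isomorphism, hence injective; and $\theta(\mathfrak{w})=\mathfrak{h}_{2n+1}/\mathbb{C}t=\mathfrak{m}_\mathcal{B}$ as vector spaces, which \emph{a fortiori} generates $\mathfrak{m}_\mathcal{B}$.

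The fourth assertion is the real content. My plan is to fit $\theta$ and $ev$ into the commutative square
\[
\begin{tikzcd}
\mathcal{A} \arrow[r,"ev"]\arrow[d,"\theta"'] & V\arrow[d,"pr"]\\
\mathcal{B} \arrow[r,"ev_\mathcal{B}"'] & \widetilde{V},
\end{tikzcd}
\]
where $pr:V\twoheadrightarrow \widetilde{V}=V/\mathbb{C}\hat{v}$ is the linear projection and $ev_\mathcal{B}:\mathcal{B}\to\widetilde{V}$, $b\mapsto b\cdot pr(\hat{o})$, is the evaluation map associated to the descended $\mathbb{G}_a^{2n}$-structure $\Theta(\tau)$ on $\mathbb{P}\widetilde{V}$. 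Commutativity is essentially the content of Proposition~\ref{ComDia} at the level of affine cones: for a generator $h\in \pi^{-1}(\mathbb{H}_{2n+1})$ of $\mathcal{A}$ and any $\hat{x}\in V$, the identity $pr(h\cdot \hat{x})=q(h)\cdot pr(\hat{x})=\theta(h)\cdot pr(\hat{x})$ follows from Proposition~\ref{ComDia}; since $pr$ is linear and $\mathcal{A}$ is generated as an associative algebra by $\pi^{-1}(\mathbb{H}_{2n+1})$, this identity propagates to $pr(a\cdot \hat{x})=\theta(a)\cdot pr(\hat{x})$ for all $a\in \mathcal{A}$ by induction on the word-length in the generators. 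Setting $\hat{x}=\hat{o}$ yields the square.

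Once the square commutes, the conclusion is immediate: the classical Hassett--Tschinkel correspondence applied to $\Theta(\tau)$ guarantees that $ev_\mathcal{B}$ is a linear isomorphism (both $\mathcal{B}$ and $\widetilde{V}$ have dimension $2n+1$, and the image contains the affine cone over the open orbit). Therefore for $a\in\ker(ev)$ one has $ev_\mathcal{B}(\theta(a))=pr(ev(a))=0$, and injectivity of $ev_\mathcal{B}$ forces $\theta(a)=0$. The main---and really only---substantive point to justify is the commutativity of the above square globally on $\mathcal{A}$, rather than only on the Lie algebra generators; everything else is either routine linear algebra or a direct invocation of the classical Hassett--Tschinkel theorem.
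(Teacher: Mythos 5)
Your proposal is correct and follows essentially the same route as the paper: the first three claims are read off from the construction of $\theta$, and the last one comes from the intertwining identity $\theta(S)\cdot pr(\hat{o})=pr(S\cdot\hat{o})=0$ combined with the injectivity of the evaluation map for the (effective) descended action. You simply make explicit the commutativity of the square that the paper asserts in one line.
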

\begin{proof}
   The first three statements follows from the construction of $\theta$. For the last statement, for any $S\in \ker(ev)$,  $\theta(S)\cdot pr(\hat{o}) = pr(S\cdot \hat{o})= 0$. Since $\mathfrak{m}_\mathcal{B}=\mathfrak{h}_{2n+1}/\mathbb{C}t$ and the additive action of $\mathbb{H}_{2n+1}/\mathbb{C}T$ is effective, then $\theta(S) = 0$.
\end{proof}

\begin{lem} \label{kernel_ev}
   $\operatorname{ker}(ev) \cap C(\mathcal{A})=0$ where $C(\mathcal{A})$ is the center of $\mathcal{A}$.
\end{lem}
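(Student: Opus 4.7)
The plan is to use the commutativity constraint to transport the vanishing $S\hat{o}=0$ across the whole orbit, and then use density of the orbit to conclude $S$ annihilates all of $V$.

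First, I would take an arbitrary $S \in \ker(ev)\cap C(\mathcal{A})$, so $S\cdot\hat{o}=0$ and $SA=AS$ for all $A\in\mathcal{A}$. Since $\mathcal{A}$ acts on $V$ by matrix multiplication, for any $A\in\mathcal{A}$ one computes
\[
S\cdot(A\cdot\hat{o}) = (SA)\cdot\hat{o} = (AS)\cdot\hat{o} = A\cdot(S\cdot\hat{o}) = 0.
\]
Thus $S$ annihilates the entire linear subspace $\mathcal{A}\cdot\hat{o}\subseteq V$.

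Next, I would verify that $\mathcal{A}\cdot\hat{o}=V$. The algebra $\mathcal{A}$ contains $\pi^{-1}(\mathbb{H}_{2n+1})$, and because the $\mathbb{H}_{2n+1}$-action on $\mathbb{P}V$ has the open dense orbit $\mathbb{H}_{2n+1}\cdot o$, the affinized orbit $\pi^{-1}(\mathbb{H}_{2n+1})\cdot\hat{o}$ is a Zariski-dense cone in $V$ and in particular linearly spans $V$. Therefore $\mathcal{A}\cdot\hat{o}\supseteq \operatorname{span}\{\pi^{-1}(\mathbb{H}_{2n+1})\cdot\hat{o}\}=V$, so $ev$ is surjective. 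Combining this with the previous step, $S$ annihilates $V$, i.e.\ $S=0$ in $\operatorname{Mat}_{2n+2}(\mathbb{C})$.

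There is essentially no serious obstacle here: the only nontrivial input is the density/spanning statement, which is a direct consequence of $\mathbb{P}V$ being an equivariant compactification of $\mathbb{H}_{2n+1}$. The role of centrality is to convert the pointwise vanishing at $\hat{o}$ into vanishing along the orbit, which is exactly what one needs to amplify $\ker(ev)$-membership into a global annihilation statement.
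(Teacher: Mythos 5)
Your proposal is correct and is essentially identical to the paper's proof: both use centrality to rewrite $S\cdot(\mathcal{A}\cdot\hat{o})=\mathcal{A}\cdot(S\cdot\hat{o})=0$ and then conclude $S=0$ from the fact that $\mathcal{A}\cdot\hat{o}$ spans $V$. Your extra justification of the spanning statement via density of the open orbit is a detail the paper leaves implicit, but the argument is the same.
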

\begin{proof}
    For each $S\in \ker(ev)\cap C(\mathcal{A})$, we have $0=\mathcal{A}S\cdot \hat{o}=S\mathcal{A}\cdot \hat{o}$. Since $\mathcal{A}\cdot \hat{o}$ spans $V$, we have $S=0$.
\end{proof}
\begin{lem}\label{center}
    If $t\cdot \mathfrak{m}=\mathfrak{m}\cdot t=0$, then $\mathfrak{m}^2 \subset C(\mathcal{A})$.
\end{lem}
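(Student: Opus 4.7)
The plan is to use the fact that $\mathcal{A}$ is generated as an associative algebra by $\mathfrak{h}_{2n+1} = \mathfrak{w} \oplus \mathbb{C}t$ together with the identity. First I would observe that $\mathfrak{m}^2$ is linearly spanned by products $x_1 \cdots x_k$ with $x_i \in \mathfrak{w} \cup \{t\}$ and $k \geq 2$, and that under the hypothesis $t \cdot \mathfrak{m} = \mathfrak{m}\cdot t = 0$, any such product in which some $x_i = t$ necessarily vanishes (since either $x_{i-1} t = 0$ or $t x_{i+1} = 0$). Hence it is enough to show that every product $a_1 \cdots a_k$ with $a_i \in \mathfrak{w}$ and $k \geq 2$ lies in $C(\mathcal{A})$.

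Next, to test centrality it suffices to check commutation with each algebra generator, namely $I$, $t$, and elements of $\mathfrak{w}$. Commutation with $I$ is trivial; for $t$, both $t(a_1\cdots a_k)$ and $(a_1\cdots a_k)t$ vanish by the standing hypothesis. The main case is $c \in \mathfrak{w}$. Here I would exploit the identity $ca_i - a_ic = [c,a_i] = \omega(c,a_i)\,t$, which holds as a matrix identity because $\mathfrak{h}_{2n+1} \subset \operatorname{Mat}_{2n+2}(\mathbb{C})$ with the commutator bracket, and because $[\mathfrak{w},\mathfrak{w}] = \mathbb{C}t$.

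The computation is then a one-at-a-time push: starting from $c\,a_1\cdots a_k$, replace $ca_1$ by $a_1c - \omega(c,a_1)\,t$, giving
\[
c a_1 a_2 \cdots a_k \;=\; a_1 c a_2 \cdots a_k \;-\; \omega(c,a_1)\, t\, a_2\cdots a_k,
\]
and note that the second term vanishes since $t\cdot a_2 = 0$. Repeating with $a_2, a_3, \ldots$, each resulting error term contains a factor $a_1\cdots a_{i-1}\, t\, a_{i+1}\cdots a_k$, which vanishes because $a_{i-1} t = 0$ for $i \geq 2$. After $k$ steps one arrives at $a_1 \cdots a_k \, c$, as desired.

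The only subtlety is bookkeeping: one must confirm that for every intermediate error term some $a_j$ is adjacent to the introduced $t$, which is exactly where the assumption $k \geq 2$ is used, and that both sides $t\mathfrak{m} = 0$ and $\mathfrak{m}t = 0$ of the hypothesis are needed (the first to handle the $i=1$ step, the second to handle the steps with $i\geq 2$). No difficulty beyond this is anticipated.
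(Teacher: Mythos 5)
Your proof is correct and follows essentially the same route as the paper: reduce to products $a_1\cdots a_k$ of Lie algebra elements with $k\geq 2$, then move a generator past the product one factor at a time, using $[c,a_i]=\omega(c,a_i)\,t$ together with $t\cdot\mathfrak{m}=\mathfrak{m}\cdot t=0$ to kill each error term. (The sign in $ca_1=a_1c-\omega(c,a_1)t$ should be $+$, but this is immaterial since that term vanishes anyway.)
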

\begin{proof}
    We can write $\mathcal{A}=\mathbb{C}I_{2n+2}\oplus\mathfrak{m}$. Since $\mathfrak{m}=\langle\mathfrak{h}_{2n+1}\rangle$, it suffices to show that for any $X_1,X_2,\cdots, X_k(k\geq 2), Z\in \mathfrak{h}_{2n+1}$, $\prod_{1\leq i\leq k}X_i\cdot Z=Z\cdot \prod_{1\leq i\leq k}X_i$. This is easy to check. When $k=2$, we have
    \[X_1X_2\cdot Z=X_1\cdot(ZX_2-\lambda t)=X_1\cdot ZX_2=(Z\cdot X_1-\lambda't)X_2=Z\cdot X_1X_2.\]
   General situation can be done by simple induction.
\end{proof}
Then we have
\begin{prop}\label{algebra_dim}
     If $T$ stabilizes the boundary $\mathbb{P}V'$, then $2n+2\leq \dim(\mathcal{A}) \leq 3n+2$. Furthermore, if we additionally assume that the descended $\mathbb{G}^{2n}_a$-action on $\mathbb{P}\widetilde{V}$ is tautological, then $\dim(\mathcal{A})=2n+2$.
\end{prop}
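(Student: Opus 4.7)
The plan is to verify the hypothesis of Lemma \ref{center}, use the resulting information to split $\mathcal{A}$ into three pieces whose sizes can be controlled, and then bound the remaining piece by a symplectic isotropy argument; the tautological case follows as a clean corollary.

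First I would check $t\cdot\mathfrak{m} = \mathfrak{m}\cdot t = 0$. Since $T$ fixes every point of $\mathbb{P}V'$ and is unipotent, it acts as the identity on $V'$, so $t$ is a rank-one nilpotent operator with kernel $V'$ and image $\mathbb{C}\hat{v}$. The boundary $\mathbb{P}V'$ is $\mathbb{H}_{2n+1}$-stable and the unipotent group acts trivially on the one-dimensional quotient $V/V'$, so $X(V)\subset V'$ for every $X\in\mathfrak{h}_{2n+1}$; combined with the fact from Proposition \ref{ComDia} that $\mathbb{H}_{2n+1}$ fixes $\hat v$ (i.e.\ $\mathfrak{h}_{2n+1}\cdot\hat v = 0$), this gives $tX = Xt = 0$ for $X\in\mathfrak{h}_{2n+1}$, and an induction on word length extends it to all of $\mathfrak{m}$. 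Lemma \ref{center} then yields $\mathfrak{m}^2\subset C(\mathcal{A})$.

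Next I would establish the decomposition $\mathcal{A} = \mathbb{C}I \oplus \mathfrak{w} \oplus \mathfrak{m}^2$. Non-degeneracy of $\omega$ forces $\mathfrak{w}\cap C(\mathcal{A}) = 0$ (for any $0\neq X\in\mathfrak{w}$ some $Y\in\mathfrak{w}$ gives $[X,Y] = \omega(X,Y)t\neq 0$), hence $\mathfrak{w}\cap\mathfrak{m}^2 = 0$; together with $\mathbb{C}t\subset\mathfrak{m}^2$ the decomposition follows. The evaluation map $ev:\mathcal{A}\to V$ is surjective (its image is a linear subspace containing the dense subset $\mathbb{C}^*\cdot\mathbb{H}_{2n+1}\cdot\hat{o}\subset V$), yielding the lower bound $\dim\mathcal{A}\geq 2n+2$. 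By Lemma \ref{kernel_ev}, $ev$ is injective on $C(\mathcal{A})\supset\mathfrak{m}^2$.

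Now to the upper bound, which I expect to be the main obstacle. Writing $a\in\mathcal{A}$ uniquely as $a = cI + w + m$ with $c\in\mathbb{C}$, $w\in\mathfrak{w}$, $m\in\mathfrak{m}^2$ and separating the $\mathbb{C}\hat{o}$ component from the $V'$ component of $ev(a)$ gives $a\in\ker(ev)$ iff $c = 0$ and $w\hat{o} = -m\hat{o}$; by the injectivity of $ev$ on $\mathfrak{m}^2$, the element $w$ determines $m$, so $\ker(ev)\cong \mathfrak{w}_0$ where
\[
\mathfrak{w}_0 := \{\,w\in\mathfrak{w} : w\hat{o}\in\mathfrak{m}^2\hat{o}\,\}.
\]
Hence $\dim\mathcal{A} = (2n+2) + \dim\mathfrak{w}_0$. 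The key step is to show $\mathfrak{w}_0$ is isotropic for $\omega$: given $X,Y\in\mathfrak{w}_0$ with $X\hat{o} = m_X\hat{o}$ and $Y\hat{o} = m_Y\hat{o}$ for some $m_X,m_Y\in\mathfrak{m}^2$, centrality of $m_X,m_Y$ in $\mathcal{A}$ yields
\[
XY\hat{o} = X(m_Y\hat{o}) = m_Y(X\hat{o}) = m_Ym_X\hat{o},\quad YX\hat{o} = m_Xm_Y\hat{o},
\]
which agree by commutativity in $C(\mathcal{A})$. On the other hand $(XY-YX)\hat{o} = [X,Y]\hat{o} = \omega(X,Y)\hat{v}$, and since $\hat{v}\neq 0$ this forces $\omega(X,Y) = 0$. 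As $\omega$ is symplectic on the $2n$-dimensional space $\mathfrak{w}$, we conclude $\dim\mathfrak{w}_0\leq n$ and hence $\dim\mathcal{A}\leq 3n+2$.

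For the tautological case, $\mathfrak{m}_\mathcal{B}^2 = 0$ forces $\mathfrak{m}^2\subset\ker\theta$, so $\mathfrak{m}^2\hat{o}\subset\mathbb{C}\hat{v}$; combined with the injectivity of $ev$ on $\mathfrak{m}^2$ and the non-vanishing of $t\in\mathfrak{m}^2$, one obtains $\mathfrak{m}^2 = \mathbb{C}t$, whence $\dim\mathcal{A} = 2n+2$.
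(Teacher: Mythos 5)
Your proposal is correct and follows essentially the same route as the paper: decompose $\mathfrak{m}=\mathfrak{w}\oplus\mathfrak{m}^2$ using Lemma \ref{center}, identify $\ker(ev)$ with a subspace of $\mathfrak{w}$ that the centrality of $\mathfrak{m}^2$ forces to be $\omega$-isotropic (hence of dimension at most $n$), and in the tautological case use $\mathfrak{m}_{\mathcal{B}}^2=0$ together with Lemmas \ref{property_algebra_morphism} and \ref{kernel_ev} to kill $\ker(ev)$. The only substantive difference is that you supply an explicit verification of $t\cdot\mathfrak{m}=\mathfrak{m}\cdot t=0$ (which the paper merely asserts via a normal form for $t$) and you phrase the isotropy computation by evaluating $[X,Y]$ at $\hat{o}$ rather than by first showing $\ker(ev)$ is abelian; both are cosmetic variants of the paper's argument.
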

\begin{proof}
  Since $T$ stabilizes the boundary $\mathbb{P}V'$, we may assume that $t=E_{1, 2n+2}$ (the matrix with the unique nonzero entry $1$ at $(1,2n+2)$-position) and $t\cdot \mathfrak{m}=\mathfrak{m}\cdot t=0$. $\dim(\mathcal{A})\geq 2n+2$ is clear since $\pi^{-1}(\mathbb{H}_{2n+1})$ is of dimension $2n+2$.

  It is easy to see that $\ker(ev) \subset \mathfrak{m}$. For nonzero $S_1, S_2 \in \ker(ev)$, $S_1S_2-S_2S_1 \in \ker(ev) \cap \mathfrak{m}^2=\{0\}$, from Lemma \ref{kernel_ev} and Lemma \ref{center}. Hence $\ker(ev)$ is an abelian Lie subalgebra of $\mathfrak{m}$.

  Now, let $S_1,\cdots, S_k$ be the basis of $\operatorname{Ker}(ev)$ and write $S_i=X_i+Y_i (1\leq i\leq k)$ for $X_i\in \mathfrak{w}, Y_i \in \mathfrak{m}^2$ (as $\mathfrak{w}$ and $\mathfrak{m}^2$ span $\mathfrak{m}$). Then $X_1,\cdots,X_k$ are linearly independent in $\mathfrak{w}\subset \mathfrak{h}_{2n+1}$ , since $\ker(ev)\cap \mathfrak{m}^2 =0$. 

  Moreover, by Lemma \ref{center} we know $Y_i\in \mathfrak{m}^2 \subset C(\mathcal{A})$ and then 
    \begin{align*}
        X_i\cdot X_j-X_j\cdot X_i &= (S_i-Y_i)\cdot(S_j-Y_j)-(S_j-Y_j)\cdot(S_i-Y_i)\\
        &=-[S_i,Y_j]-[Y_i, S_j]+[Y_i,Y_j]=0.
    \end{align*}
    
  Therefore, $\langle X_1, \cdots, X_k\rangle\subset \mathfrak{h}_{2n+1}$ is an abelian Lie subalgebra of $\mathfrak{h}_{2n+1}$ contained in $\mathfrak{w}$. This implies $\operatorname{dim}(\ker(ev))=k\leq n$, and $\operatorname{dim}(\mathcal{A})\leq 3n+2$.

  If the descended $\mathbb{G}^{2n}_a$-action on $\mathbb{P}\widetilde{V}$ is tautological, for any $S\in \ker(ev)$, we may write $S = X_S + Y_S$ with $X_S\in \mathfrak{w}$, and $Y_S\in \mathfrak{m}^2$. Note that for the tautological additive action, the corresponding $\mathfrak{m}_\mathcal{B}$ satisfies $\mathfrak{m}^2_\mathcal{B}=0$.
    Therefore, from Lemma \ref{property_algebra_morphism} and  $\theta(Y_S)\in \mathfrak{m}^2_\mathcal{B}$, we have $0=\theta(S) = \theta(X_S)+ \theta(Y_S) = \theta(X_S)$. Again from Lemma \ref{property_algebra_morphism}, together with Lemma \ref{kernel_ev} and Lemma \ref{center}, we have $X_S=0$ and hence $S\in \ker(ev)\cap \mathfrak{m}^2=\{0\}$.  Consequently,
    $\ker(ev)=0$ and $\operatorname{dim}(\mathcal{A}) = 2n+2$.
\end{proof}
\begin{exmp}\label{exm1}
    Let the Heisenberg Lie algebra $\mathfrak{h}_{2n+1}$ be generated by the following $2n+1$ elements:
    \begin{align*}
    & X_i = E_{1,i+1}+E_{i+1,n+i+1}, 1\leq i\leq k\leq n, \\
    & X_j = E_{1,j+1}, k<j\leq n,\\
    & Y_i = E_{1,n+i+1}+E_{n-i+2,2n+2}, 1\leq i\leq n,\\
    & t = E_{1,2n+2},
    \end{align*}
    where $E_{ij}\in \operatorname{Mat}_{2n+2}$ denotes the matrix with a unique nonzero entry $1$ in the $(i, j)$-position.
    Then one can easily check the corresponding algebra $\mathcal{A}$ is dimension $2n+2+k$ for $0\leq k\leq n$.
\end{exmp}

\begin{rem}
One cannot apply the classical Hassett-Tschinkel correspondence of additive structures to obtain the infinity of $\mathbb{H}_{2n+1}$-structures directly, as the induced additive structure does not exhaust all additive structures on $\mathbb{P}^{2n}$, for the following reason.

Since $\mathfrak{w} \cap C(\mathcal{A})=0$ and $\mathfrak{m}^2\subset C(\mathcal{A})$ by Lemma \ref{center}, we have  $\mathfrak{m} = \mathfrak{w} \oplus\mathfrak{m}^2$ and hence $\dim(\mathfrak{m}^2)\leq n+1$ (since $\dim(\mathcal{A})\leq 3n+2$). Then we know $\mathfrak{m}^{n+3} = 0$ by Nakayama Lemma. 
Consider the additive structure on $\mathbb{P}^{2n}$ associated with the commutative local algebra $\mathcal{B}$ such that $\mathfrak{m}_{\mathcal{B}}^{2n}\neq 0$ (e.g, let $\mathcal{B}\cong \mathbb{C}[x]/(x^{2n+1})$), then there does not exist an algebra $\mathcal{A}$ such that $\theta : \mathcal{A} \to \mathcal{B}$ is an epimorphism with $\theta(\mathfrak{m})=\mathfrak{m}_{\mathcal{B}}$ when $n\geq 3$.
\end{rem}

From Proposition \ref{algebra_dim}, we can see that when the descended $\mathbb{G}^{2n}_a$-action is tautological, the unique maximal ideal $\mathfrak{m}$ has dimension $2n+1$ and hence $\mathfrak{m}=\mathfrak{h}_{2n+1}$. 

\begin{defn}
 For an $\mathbb{H}_{2n+1}$-structure on $\mathbb{P}V$.  If $T$ fixes the boundary $\mathbb{P}V'$ and the descended $\mathbb{G}^{2n}_a$-action is tautological, the corresponding unital associative algebra $\mathcal{A}$ is called a \textbf{tautological algebra}.
\end{defn}
For a tautological algebra $\mathcal{A}=\mathbb{C}I_{2n+2}\oplus \mathfrak{m} \subset \operatorname{Mat}_{2n+2}(\mathbb{C})$ with $\mathfrak{m}=\mathfrak{h}_{2n+1}$, let $X_1, \cdots, X_{n}, X_{n+1}, \cdots X_{2n}, t$
be a symplectic basis of $\mathfrak{m}$, i.e.,
$$[X_i,X_j] = X_i\cdot X_{j}-X_{j}\cdot X_i =\omega(X_i,X_j) t,\quad 1 \leq i,j\leq 2n $$ with $$\omega(X_i, X_{n+j}) = \delta_{ij}, \omega(X_i, X_{j}) = \omega(X_{n+i}, X_{n+j}) =0$$ for all $1\leq i,j\leq n$.
Since $\theta(X_i\cdot X_j) =\theta(X_i)\cdot \theta(X_j) \in \mathfrak{m}^2_{\mathcal{B}}=\{0\}$, and $\ker(\theta) =\mathbb{C}t$ (Lemma \ref{property_algebra_morphism}), we obtain that $X_i\cdot X_j = a_{ij}t$.
Therefore, we have a \textbf{structure matrix} $M_{\mathcal{A}} = (a_{ij})\in \operatorname{Mat}_{2n}(\mathbb{C})$ with respect to the symplectic basis, and $M_{\mathcal{A}}-M_{\mathcal{A}}^{t} = \Omega$, the standard skew-symmetric matrix. 

Conversely, for any matrix $M=(a_{ij})$ with $M-M^t = \Omega$, let 
\[
    X_i = E_{1,i+1}+\sum_{l=1}^{2n}a_{li}E_{l+1,2n+2}\in \operatorname{Mat}_{2n+2}(\mathbb{C}), \; 1\leq i\leq 2n,
\]
and $t=E_{1,2n+2}$, then $X_i\cdot X_j = a_{ij}t$, $\mathcal{A} =\mathbb{C}I_{2n+2} \oplus \langle t, X_1,\dots, X_{2n}\rangle$ is a local algebra of dimension $2n+2$, which is a tautological algebra and the unique maximal ideal $\mathfrak{m}$ is isomorphic to $\mathfrak{h}_{2n+1}$ as Lie algebra.
    
\begin{lem}\label{lem1}
    Two tautological algebras $\mathcal{A}_1, \mathcal{A}_2$ are isomorphic if and only if there exists an invertible matrix $C$ such that $M_1 = C\cdot M_2\cdot C^t$ where $M_1, M_2$ are the structure matrices of $\mathcal{A}_1, \mathcal{A}_2$ with respect to some symplectic bases $\{t_1, X_j^1\}$ and $\{t_2, X_j^2\}$ of $\mathfrak{m}_1$ and $\mathfrak{m}_2$ respectively.
\end{lem}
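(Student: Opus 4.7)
My plan is to reduce both implications to a single bilinear computation. For any candidate linear map $\phi : \mathcal{A}_1 \to \mathcal{A}_2$ sending $X_i^1$ to $\sum_j c_{ij} X_j^2 + (\text{central terms})$, comparing $\phi(X_i^1 \cdot X_j^1)$ against $\phi(X_i^1)\cdot \phi(X_j^1)$ translates directly into a conjugation identity relating $M_1$ and $M_2$.

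\textbf{Forward direction.} Let $\phi : \mathcal{A}_1 \to \mathcal{A}_2$ be an algebra isomorphism. Since $\phi$ preserves the Jacobson radical, $\phi(\mathfrak{m}_1) = \mathfrak{m}_2$; because $M_i - M_i^T = \Omega \ne 0$ forces $M_i \ne 0$, we have $\mathfrak{m}_i^2 = \mathbb{C} t_i$ one-dimensional, so $\phi(t_1) = \lambda t_2$ for some $\lambda \in \mathbb{C}^*$. Writing $\phi(X_i^1) = \sum_j c_{ij} X_j^2 + d_i t_2$, the induced isomorphism $\mathfrak{m}_1/\mathbb{C} t_1 \to \mathfrak{m}_2/\mathbb{C} t_2$ ensures $C = (c_{ij}) \in \GL_{2n}(\mathbb{C})$. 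Evaluating $\phi(X_i^1 X_j^1)$ in two ways—the cross terms involving $d_i t_2$ drop out because $t_2 \cdot X_k^2 = X_k^2 \cdot t_2 = t_2^2 = 0$—yields $\lambda a_{ij}^1 t_2 = (C M_2 C^T)_{ij}\, t_2$, i.e.\ $\lambda M_1 = C M_2 C^T$. Setting $C' := \lambda^{-1/2} C \in \GL_{2n}(\mathbb{C})$ then gives $M_1 = C' M_2 (C')^T$, as desired.

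\textbf{Reverse direction.} Given an invertible $C$ with $M_1 = C M_2 C^T$, I would define $\phi : \mathcal{A}_1 \to \mathcal{A}_2$ on the basis by $\phi(I) = I$, $\phi(t_1) = t_2$, and $\phi(X_i^1) = \sum_j c_{ij} X_j^2$, then extend linearly. The relations $t_1 \cdot X_i^1 = X_i^1 \cdot t_1 = t_1^2 = 0$ transport trivially, and the same bilinear calculation as above gives $\phi(X_i^1)\phi(X_j^1) = (C M_2 C^T)_{ij}\, t_2 = a_{ij}^1 t_2 = \phi(X_i^1 X_j^1)$. Thus $\phi$ is an algebra homomorphism, and it is bijective because $\dim \mathcal{A}_1 = \dim \mathcal{A}_2 = 2n+2$ and $C$ is invertible.

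The only delicate point I foresee is the scalar $\lambda$ in the forward direction, which is absorbed into $C'$ via a square root in $\mathbb{C}$. As a consistency check, the skew-symmetric part of the identity $M_1 = C M_2 C^T$ yields $C \Omega C^T = \Omega$, so any such $C$ is automatically symplectic—matching the fact that an algebra isomorphism of $\mathcal{A}_1, \mathcal{A}_2$ restricts to a Lie algebra isomorphism of the Heisenberg subalgebras, necessarily preserving the symplectic form up to the rescaling already absorbed into $\lambda$.
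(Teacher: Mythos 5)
Your proof is correct and follows essentially the same route as the paper: in the forward direction the paper likewise expands $f(X_i^1)=\sum_j c_{ij}X_j^2 + a_i t_2$, uses $t_i\cdot\mathfrak{m}_i=\mathfrak{m}_i\cdot t_i=0$ to kill the central cross terms, derives $kM_1 = C M_2 C^t$, and absorbs the scalar into $C$ (your explicit $\lambda^{-1/2}$ rescaling); the paper identifies $\mathbb{C}t_i$ as the Lie-algebra center where you use $\mathfrak{m}_i^2$, an immaterial difference. The reverse direction, which the paper dismisses as obvious, is the explicit construction you give.
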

\begin{proof}
    Let $f: \mathcal{A}_1\to \mathcal{A}_2$ be an algebra isomorphism, and let $t_i, X_j^i$ be a basis of $\mathfrak{m}_i$ corresponding to the matrices $M_i$, where $i=1,2$. 
    Since $\mathcal{A}_1, \mathcal{A}_2$ are tautological, $f(\mathfrak{m}_1)=\mathfrak{m}_2$. As $C(\mathfrak{m}_1)=\mathbb{C}t_1, C(\mathfrak{m}_2)=\mathbb{C}t_2$, we have $f(t_1)= kt_2$ for some $k\neq 0$, $f(X_i^1)= \sum c_{ij}X_j^2 + a_i t_2$. Denote $C= (c_{ij})$, since $f(X^1_i\cdot X^1_j) = f(X^1_i)\cdot f(X^1_j)$ and $t_i\cdot \mathfrak{m}_i=\mathfrak{m}_i\cdot t_i=0$, we obtain that $kM_1 = C\cdot M_2\cdot C^t$ and by adjusting $C$ we may assume that $k=1$. The other direction is obvious.
\end{proof}
Two matrices $M_1,M_2$ in $\operatorname{Mat}_{2n}(\mathbb{C})$ are said to be equivalent if there exists an invertible matrix $C$ such that $M_1=C\cdot M_2\cdot C^t$. Then we have
\begin{lem}\label{lem2}
    Let $\mathcal{M}$ be the set of equivalence classes of matrices $M\in\operatorname{Mat}_{2n}(\mathbb{C})$ satisfying $M-M^t = \Omega$, where $\Omega\in \operatorname{GL}_{2n}(\mathbb{C})$, the standard skew-symmetric matrix. Define $\mathcal{S}$ as the set of the orbits of the action of  $\operatorname{Sp}(\Omega)$ on $\operatorname{Sym}_{2n}\subset \operatorname{Mat}_{2n}(\mathbb{C})$, the space of symmetric matrices, given by $(C, M)\mapsto  C\cdot M\cdot C^t$. Then there exists an bijective map between $\mathcal{M}$ and $\mathcal{S}$.
\end{lem}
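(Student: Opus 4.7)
The plan is to exhibit an explicit pair of inverse maps between $\mathcal{M}$ and $\mathcal{S}$, using the decomposition forced by the constraint $M - M^t = \Omega$. First I would note the key structural fact: any $M \in \operatorname{Mat}_{2n}(\mathbb{C})$ with $M - M^t = \Omega$ decomposes uniquely as $M = S + \tfrac{1}{2}\Omega$ with $S = \tfrac{1}{2}(M + M^t) \in \operatorname{Sym}_{2n}$, since the skew-symmetric part of $M$ must equal $\tfrac{1}{2}\Omega$. Conversely, adding $\tfrac{1}{2}\Omega$ to any symmetric $S$ produces a matrix in the constraint set. So at the level of matrices there is an obvious bijection between the constraint set and $\operatorname{Sym}_{2n}$; the work is to check that it descends to orbits.

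Next I would show that the $\operatorname{GL}_{2n}(\mathbb{C})$-equivalence on the constraint set is automatically implemented by $\operatorname{Sp}(\Omega)$. If $M_1, M_2$ both satisfy $M_i - M_i^t = \Omega$ and $M_1 = C M_2 C^t$ for some invertible $C$, then taking the skew-symmetric part of both sides gives $\Omega = C \Omega C^t$, whence $C \in \operatorname{Sp}(\Omega)$. This is the observation that couples the two orbit spaces.

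With that in place, define $\Phi : \mathcal{M} \to \mathcal{S}$ by $\Phi([M]) = [M - \tfrac{1}{2}\Omega]$ and $\Psi : \mathcal{S} \to \mathcal{M}$ by $\Psi([S]) = [S + \tfrac{1}{2}\Omega]$. For well-definedness of $\Phi$, if $M_1 = C M_2 C^t$ with $C \in \operatorname{Sp}(\Omega)$, then
\[
M_1 - \tfrac{1}{2}\Omega = C M_2 C^t - \tfrac{1}{2}\Omega = C M_2 C^t - \tfrac{1}{2} C \Omega C^t = C \bigl(M_2 - \tfrac{1}{2}\Omega\bigr) C^t,
\]
and well-definedness of $\Psi$ is the same computation in reverse. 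The identities $\Phi \circ \Psi = \operatorname{id}_{\mathcal{S}}$ and $\Psi \circ \Phi = \operatorname{id}_{\mathcal{M}}$ then follow directly from the uniqueness of the decomposition.

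I expect the argument to be essentially formal; the only conceptual subtlety is the observation in the second paragraph, namely that $\operatorname{GL}_{2n}$-equivalences between matrices in the constraint set are forced to be symplectic, which is exactly what makes the symmetric part transform under $\operatorname{Sp}(\Omega)$ rather than under the larger group. I do not anticipate a genuine obstacle beyond bookkeeping.
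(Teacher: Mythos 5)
Your proposal is correct and matches the paper's argument essentially step for step: the unique decomposition $M = \operatorname{S}(M) + \tfrac{1}{2}\Omega$, the observation that equating skew-symmetric parts forces $C\Omega C^t = \Omega$ so the equivalence is symplectic, and the explicit inverse $[N]\mapsto[N+\tfrac{1}{2}\Omega]$ are exactly what the paper does. No differences worth noting.
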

\begin{proof}
    For every matrix $M$, there exist a unique decomposition $M = \operatorname{S}(M)+\operatorname{AS}(M)$ such that $\operatorname{S}(M):= \frac{1}{2}(M+M^t)$ is symmetric, $\operatorname{AS}(M):= \frac{1}{2}(M-M^t)$ is skew-symmetric. For two equivalent matrices $M_i$ satisfying $M_i-M_i^t = \Omega$ and $M_1=C\cdot M_2 \cdot C^t$, we have $\operatorname{S}(M_1) = C\cdot \operatorname{S}(M_2)\cdot C^t$, and $\Omega = C\cdot\Omega \cdot C^t$. Therefore, we obtain a well-defined map $F:\mathcal{M}\to \mathcal{S}, [M]\mapsto [\operatorname{S}(M)]$. It is clear that $F$ is injective. For any symmetric matrix $N$, then $[N+\frac{1}{2}\Omega]\in \mathcal{M}$, and $F([N+\frac{1}{2}\Omega]) = [N]$. Therefore, $F$ is bijective.
\end{proof}
\begin{prop}\label{infinite_S}
    There are infinitely many elements in $\mathcal{M}$, i.e., there are infinitely may non-isomorphic tautological algebras.
\end{prop}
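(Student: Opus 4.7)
The plan is to invoke Lemma \ref{lem2} and reduce the statement to showing that the orbit set $\mathcal{S}$ is infinite; for this it suffices to exhibit a single numerical invariant of the $\operatorname{Sp}(\Omega)$-action $N\mapsto C\cdot N\cdot C^{t}$ on $\operatorname{Sym}_{2n}(\mathbb{C})$ that already attains infinitely many values. The natural candidate is the determinant, and verifying it works is essentially a one-line check.

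First, I would recall the standard fact that every $C\in\operatorname{Sp}(\Omega)\subset\operatorname{SL}_{2n}(\mathbb{C})$ has determinant $1$. Consequently, for any $N\in\operatorname{Sym}_{2n}(\mathbb{C})$,
\[
    \det(C\cdot N\cdot C^{t}) \;=\; (\det C)^{2}\,\det N \;=\; \det N,
\]
so the map $\det:\operatorname{Sym}_{2n}(\mathbb{C})\to\mathbb{C}$ descends to a well-defined function $\overline{\det}:\mathcal{S}\to\mathbb{C}$.

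Next, I would produce a family of symmetric matrices realizing every complex number as a determinant value: for each $\lambda\in\mathbb{C}$, the diagonal matrix $N_{\lambda}=\operatorname{diag}(\lambda,1,\ldots,1)$ belongs to $\operatorname{Sym}_{2n}(\mathbb{C})$ and has $\det N_{\lambda}=\lambda$. Thus $\overline{\det}$ is surjective, and in particular its fibers give infinitely many distinct orbits, so $|\mathcal{S}|=\infty$. By Lemma \ref{lem2} this yields $|\mathcal{M}|=\infty$, and by Lemma \ref{lem1} this is exactly the statement that there are infinitely many pairwise non-isomorphic tautological algebras.

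There is no serious obstacle in this plan: the only subtlety is making sure to use a symplectic invariant rather than a mere $\operatorname{GL}$-invariant (over $\mathbb{C}$, the $\operatorname{GL}_{2n}$-congruence orbits on $\operatorname{Sym}_{2n}$ are finite, classified by rank, so restricting to the symplectic subgroup is essential), and this is precisely what is gained from the identity $\det C=1$ on $\operatorname{Sp}(\Omega)$.
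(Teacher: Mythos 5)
Your proposal is correct and follows the same route as the paper: reduce via Lemma \ref{lem2} to showing that the $\operatorname{Sp}(\Omega)$-action on $\operatorname{Sym}_{2n}(\mathbb{C})$ has infinitely many orbits. The paper simply asserts this last fact, whereas you justify it with the determinant invariant (using $\det C=1$ for $C\in\operatorname{Sp}(\Omega)$) realized on the family $\operatorname{diag}(\lambda,1,\dots,1)$ --- a clean and correct way to fill in the step the paper leaves implicit, and your remark that a $\operatorname{GL}_{2n}$-invariant such as rank would not suffice is exactly the right point to flag.
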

\begin{proof}
   Since the action of  $\operatorname{Sp}(\Omega)$ on $\operatorname{Sym}_{2n}\subset \operatorname{Mat}_{2n}(\mathbb{C})$ has infinitely many orbits, from Lemma \ref{lem2} we obtain the conclusion.
\end{proof}

Finally we show the following.
\begin{prop}\label{corres}
 If two $\h_{2n+1}$-structures on the projective space $\mathbb{P}V$ are equivalent, the corresponding associative algebras are isomorphic.
\end{prop}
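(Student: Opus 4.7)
The plan is to produce the algebra isomorphism directly by conjugating with a lift of the variety isomorphism that witnesses the equivalence of the two actions.

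First, I would unpack Definition \ref{actionequi}. Let $\tau_1,\tau_2$ be the two equivalent $\mathbb{H}_{2n+1}$-structures on $\mathbb{P}V$, with group isomorphism $\phi:\mathbb{H}_{2n+1}\to\mathbb{H}_{2n+1}$ and variety automorphism $\psi:\mathbb{P}V\to \mathbb{P}V$ making the diagram commute. Regarding each action as an injective homomorphism $\tau_i:\mathbb{H}_{2n+1}\hookrightarrow\operatorname{Aut}(\mathbb{P}V)=\mathbb{P}\GL_{2n+2}(\mathbb{C})$, the commutative diagram in the definition translates into the relation
\[
    \tau_2(\phi(g))=\psi\circ\tau_1(g)\circ\psi^{-1}\quad\text{in }\mathbb{P}\GL_{2n+2}(\mathbb{C})
\]
for every $g\in\mathbb{H}_{2n+1}$. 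Writing $H_i=\tau_i(\mathbb{H}_{2n+1})\subset\mathbb{P}\GL_{2n+2}(\mathbb{C})$, the isomorphism $\phi$ is absorbed into $\tau_2\circ\phi$, so we obtain the key geometric relation $H_2=\psi H_1\psi^{-1}$.

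Next, I would choose any lift $\Psi\in\GL_{2n+2}(\mathbb{C})$ of $\psi$ under $\pi$. Conjugation $\operatorname{Ad}(\Psi): X\mapsto \Psi X\Psi^{-1}$ is an automorphism of the associative algebra $\operatorname{Mat}_{2n+2}(\mathbb{C})$, and it does not depend on the choice of lift since scalar matrices act trivially by conjugation. Because $\operatorname{Ad}(\Psi)$ preserves the central subgroup $\mathbb{C}^*\cdot I_{2n+2}$ and, after passing to $\mathbb{P}\GL_{2n+2}(\mathbb{C})$, sends $H_1$ onto $H_2$, one concludes $\operatorname{Ad}(\Psi)(\pi^{-1}(H_1))=\pi^{-1}(H_2)$.

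Finally, since $\mathcal{A}_i := \langle \pi^{-1}(H_i)\rangle$ is by definition the associative subalgebra of $\operatorname{Mat}_{2n+2}(\mathbb{C})$ generated by $\pi^{-1}(H_i)$, and any associative algebra automorphism sends a generated subalgebra to the subalgebra generated by the image of the generators, the automorphism $\operatorname{Ad}(\Psi)$ restricts to an isomorphism $\mathcal{A}_1\xrightarrow{\sim}\mathcal{A}_2$. I expect no serious obstacle here: the only technical care required is the projective-versus-linear bookkeeping while lifting $\psi$ from $\mathbb{P}\GL_{2n+2}(\mathbb{C})$ to $\GL_{2n+2}(\mathbb{C})$, together with the observation that $\phi$ contributes nothing independent because it is absorbed into the image $H_2=\tau_2(\mathbb{H}_{2n+1})$.
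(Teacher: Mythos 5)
Your proposal is correct and is essentially the paper's argument: both reduce to the observation that the relation $\tau_2(\phi(g))=\psi\circ\tau_1(g)\circ\psi^{-1}$ forces conjugation by (a lift of) $\psi$ to carry the first algebra onto the second. The paper runs the conjugation through the universal enveloping algebras of the Lie algebras $\mathfrak{h}^i_{2n+1}$ rather than through the groups' preimages $\pi^{-1}(H_i)$, but since $\langle\pi^{-1}(H_i)\rangle=\mathbb{C}I_{2n+2}\oplus\langle\mathfrak{h}^i_{2n+1}\rangle$ inside $\operatorname{Mat}_{2n+2}(\mathbb{C})$, the two formulations produce the same subalgebras and the same isomorphism $\operatorname{Ad}_{\Psi}$.
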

\begin{proof}
    Let $\alpha_i: \h^i_{2n+1} \to \mathbb{P}\operatorname{GL}(V)$ be two equivalent $\h_{2n+1}$-structures on the projective space $\mathbb{P}V$. That is, there are isomorphisms $\phi:\h^1_{2n+1}\to \h^2_{2n+1}$ and $\psi\in \operatorname{GL}(V)$ such that $\alpha_2\circ \phi = \psi\circ \alpha_1 \circ \psi^{-1}$. Consider the corresponding Lie algebra isomorphism  $d\phi:\mathfrak{h}^1_{2n+1}\to \mathfrak{h}^2_{2n+1}$ and its extension on the universal enveloping algebra $\Phi: \mathbb{U}(\mathfrak{h}^1_{2n+1})\to\mathbb{U}(\mathfrak{h}^2_{2n+1})$, we obtain the following commutative diagram, where $\tau_1, \tau_2$ are induced by Lie algebra monomorphism $d\alpha_1, d\alpha_2$ respectively, and  $\operatorname{Ad}_\psi$ denotes the conjugation by $\psi \in \operatorname{GL}(V)$.
    \begin{equation*}
        \begin{tikzcd}[row sep=huge]
        \mathbb{U}(\mathfrak{h}^1_{2n+1}) \arrow[r,"\tau_1"] \arrow[d,swap,"\Phi"] &
        \operatorname{End}(V) \arrow[d,"\operatorname{Ad}_{\psi}"] \\
        \mathbb{U}(\mathfrak{h}^2_{2n+1}) \arrow[r,"\tau_2"]  & \operatorname{End}(V),
        \end{tikzcd}
    \end{equation*}
    then the induced homomorphism $\tilde{\phi}:\mathcal{A}_1\to \mathcal{A}_2$ is an isomorphism with $\mathcal{A}_i = \tau_i(\mathbb{U}(\mathfrak{h}^i_{2n+1}))(i=1,2)$ and the restriction on $\mathfrak{h}^1_{2n+1}$ is the Lie algebra isomorphism. 
 
\end{proof}

\begin{thm}\label{mainthm}
    There are infinitely many inequivalent $\mathbb{H}_{2n+1}$-structures on $\mathbb{P}V = \mathbb{P}^{2n+1}$ such that the induced $\mathbb{G}^{2n}_a$ structures on $\mathbb{P}\widetilde{V}$ are tautological. In particular, there are infinitely many inequivalent $\mathbb{H}_{2n+1}$-structures on $\mathbb{P}V = \mathbb{P}^{2n+1}$.
\end{thm}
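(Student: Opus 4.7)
The plan is to assemble the preceding technical results into a short argument: build an $\h_{2n+1}$-structure on $\mathbb{P}V$ from each tautological algebra, and invoke Proposition \ref{corres} to deduce that non-isomorphic algebras yield inequivalent structures. Proposition \ref{infinite_S} then supplies infinitely many algebras, hence infinitely many structures.

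First, I would recall the explicit construction from the paragraph preceding Lemma \ref{lem1}: for each matrix $M=(a_{ij})\in\operatorname{Mat}_{2n}(\mathbb{C})$ with $M-M^t=\Omega$, the elements
\[
X_i = E_{1,i+1}+\sum_{l=1}^{2n}a_{li}E_{l+1,2n+2},\quad t = E_{1,2n+2}
\]
generate a local associative unital algebra $\mathcal{A}_M=\mathbb{C}I_{2n+2}\oplus\langle t,X_1,\dots,X_{2n}\rangle$ of dimension $2n+2$, whose maximal ideal is a copy of the Heisenberg Lie algebra with center $\mathbb{C}t$. Exponentiating yields a copy of $\mathbb{H}_{2n+1}$ inside $\operatorname{GL}_{2n+2}(\mathbb{C})$, and composition with $\pi$ embeds $\mathbb{H}_{2n+1}$ into $\operatorname{Aut}(\mathbb{P}V)$; acting on the class of the first standard basis vector produces an open orbit, so this defines an $\mathbb{H}_{2n+1}$-structure on $\mathbb{P}V$.

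Next, I would check two easy properties of this structure to place it within the framework of Section 2. First, $T=\exp(t)$ fixes $\mathbb{P}V'$ pointwise because $t$ annihilates all standard basis vectors except the last. Second, modulo $\mathbb{C}t$ the generators $X_i$ descend to $E_{1,i+1}$, whose exponentials are the usual translations on $\mathbb{P}\widetilde V$; hence the induced $\mathbb{G}_a^{2n}$-action is the tautological one in the sense of Definition \ref{tautolo_des}, and $\mathcal{A}_M$ is indeed the tautological algebra associated to it by Proposition \ref{algebra_dim}.

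Finally, I would conclude the dichotomy. By Proposition \ref{infinite_S}, the set $\mathcal{M}$ is infinite, so after applying Lemma \ref{lem1} we obtain infinitely many pairwise non-isomorphic tautological algebras $\mathcal{A}_M$. Each yields an $\mathbb{H}_{2n+1}$-structure on $\mathbb{P}V$ by the construction above, and by Proposition \ref{corres} equivalent structures have isomorphic associated algebras. Therefore the resulting $\mathbb{H}_{2n+1}$-structures lie in infinitely many equivalence classes, and the "in particular" statement follows immediately. The only substantive point to verify beyond quoting earlier results is that the construction $M\mapsto\mathcal{A}_M\mapsto$ ($\mathbb{H}_{2n+1}$-structure) really lands in the special class studied in Section 2, so that Proposition \ref{corres} may be invoked; this is the content of the two easy checks above.
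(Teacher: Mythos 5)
Your proposal is correct and follows essentially the same route as the paper: realize each matrix $M$ with $M-M^t=\Omega$ as a tautological algebra $\mathcal{A}_M$ via the explicit generators, get infinitely many non-isomorphic such algebras from Proposition \ref{infinite_S}, and conclude via Proposition \ref{corres} that the associated $\mathbb{H}_{2n+1}$-structures are pairwise inequivalent. The paper's own proof is a one-line citation of these two propositions, leaving implicit the realization step and the checks that $T$ fixes the boundary and the descended action is tautological, which you spell out explicitly.
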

\begin{proof}

 Applying Proposition \ref{corres} on tautological algebras and from Proposition \ref{infinite_S}, we obtain the conclusion. 
\end{proof}
 
\section*{Acknowledgement}
	 The authors would like to thank Jun-Muk Hwang for introducing the problem and some helpful discussions. The first author would like to thank Zhizhong Huang for providing references. The first author was supported by a start-up funding from Shenzhen University (000001032064) and Shenzhen Peacock Plan. The second author was supported by the Institute for Basic Science (IBS-R032-D1-2025-a00).
\bibliographystyle{alpha}
\bibliography{ref}
\end{document}